\newtheorem{thm}{Theorem}
\newtheorem{cor}[thm]{Corollary}
\newtheorem{prop}[thm]{Proposition}
\newtheorem*{prop*}{Proposition}
\newtheorem{lem}[thm]{Lemma}
\newtheorem*{exa}{Example}
\newcommand\nolabel[1]{\nonumber}
\newcommand\R{\mathbb{R}}
\renewcommand\P{\mathcal{P}}
\newcommand\N{\mathbb{N}}
\newcommand\Q{\mathbb{Q}}
\newcommand\Z{\mathbb{Z}}
\newcommand{\QA}[1]{A^{[#1]}}
\newcommand{\calM}{\mathcal{M}}
\newcommand{\calF}{\mathcal{F}}
\newcommand{\calI}{\mathcal{I}}
\newcommand{\intgen}[1]{\left[\left[#1\right]\right]}
\DeclareMathOperator{\interior}{int}
\def\eq#1{{\rm(\ref{#1})}}
\def\Eq#1#2{\ifthenelse{\equal{#1}{*}}
  {\begin{equation*}\begin{aligned}[]#2\end{aligned}\end{equation*}}
  {\begin{equation}\begin{aligned}[]\label{#1}#2\end{aligned}\end{equation}}}
    \subjclass[2010]{26E60, 26D15}
\keywords{quasi-arithmetic means, sandwich theorems, Arrow-Pratt index, comparability, smoothness assumptions}
\title{Interval-type theorems concerning quasi-arithmetic means}
\author[P. Pasteczka]{Pawe\l{} Pasteczka}
\address{Institute of Mathematics \\ Pedagogical University of Cracow \\ Podchor\k{a}\.zych str. 2, 30-084 Krak\'ow, Poland}
\email{pawel.pasteczka@up.krakow.pl}
\begin{document}
\maketitle

\begin{abstract}
Family of quasi-arithmetic means has a natural, partial order (point-wise order) $A^{[f]}\le A^{[g]}$  if and only if $A^{[f]}(v)\le A^{[g]}(v)$ for all admissible vectors $v$ ($f,\,g$ and, later, $h$ are continuous and monotone and defined on a common interval).

Therefore one can introduce the notion of interval-type sets (sets $\mathcal{I}$ such that whenever $A^{[f]} \le A^{[h]} \le A^{[g]}$ for some $A^{[f]},\,A^{[g]} \in \mathcal{I}$ then $A^{[h]} \in \mathcal{I}$ too). 

Our aim is to give examples of interval-type sets involving vary smoothness assumptions of generating functions. 
\end{abstract}


\section{Introduction}
In a recent paper \cite{Pas17a} author introduced a new definition concerning means. A family $\calM$ of means (functions) defined on a common domain is embedded in a natural partial order, that is for every $M,N \in \calM$ we have 
\Eq{*}{
M \le N \iff M(x) \le N(x) \text{ for all }x.
}

In this setting we call $\calI \subset \calM$ to be an interval-type set in $\calM$ (briefly: interval-type set or interval) if whenever $P \in \calM$ and $M\le P \le N$ for some $M,\,N \in \calI$ then also $P \in \calI$.
  
Many families of means are linearly ordered by this process. For example one of them most classical result in a theory of means states that power means are linearly ordered, that is if we denote by $\P_p$ the $p$-th power mean, then $(\{\P_p\}_{p \in \R},\le)$ is isomorphic to $(\R,\le)$ under the natural isomorphism $\P_p \mapsto p$. In particular all intervals in this family could be trivially described.

Situation becomes much more interesting if there appear means which are not comparable among each other.
Perhaps the most famous family of this type are quasi-arithmetic means.
They were introduced in series of nearly simultaneous papers in a beginning of 1930s \cite{Def31,Kol30,Nag30} as a generalization of already mentioned family of power means. 
For a continuous and strictly monotone function $f \colon I \to \R$ ($I$ is an interval) and a vector $a=(a_1,a_2,\dots,a_n) \in I^n$, $n \in \N$ we define 
\Eq{*}{
\QA{f}:=f^{-1}\left( \frac{f(a_1)+f(a_2)+\cdots+f(a_n)}{n} \right).
}

It is easy to verify that for
$I=\R_+$ and $f=\pi_p$, where $\pi_p(x):=x^p$ if $p\ne 0$ and $\pi_0(x):=\ln x$, then mean $\QA{f}$ coincides 
with $\P_p$ (this fact had been already noticed by Knopp \cite{Kno28} before quasi-arithmetic means were formally introduced).

Then, whenever $f$ and $g$ are defined on a common interval $I$, we get
\Eq{*}{
\QA{f} \le \QA{g} \text{ if and only if } \QA{f}(a) \le \QA{g}(a) \text{ for all }a \in \bigcup_{n=1}^{\infty} I^n.
}
As we do not define comparability of means defined on two different intervals, throughout all quasi-arithmetic means are considered on an arbitrary, but common, interval (from now on denoted by $I$). We will be dealing with interval-type sets in a family of quasi-arithmetic means defined on $I$ (we will call them briefly interval-type sets or intervals).

Let us recall some simple, however important, results from our previous paper \cite{Pas17a}. It could be proved that interval-type sets inherit many properties of regular intervals in $\R$. For example intersection of any number of intervals are again an interval, increasing sum of intervals are again an interval and so on -- proofs of this facts are elementary and omitted here; for detailed discussion we refer the reader to \cite{Pas17a}.
Moreover, if $D \subset \bigcup_{n=1}^{\infty} I^n$ and 
$L,\,U \colon D \to \R$ are arbitrary functions and then both
\Eq{*}{
[L,+\infty):=\{\QA{f} \colon L(v) \le \QA{f}(v) \text{ for all } v \in D\}, \\
(L,+\infty):=\{\QA{f} \colon L(v) < \QA{f}(v) \text{ for all } v \in D\}
}
are intervals. Similarly we can define all possible intervals of this type involving $-\infty$. Having this we define bounded intervals of this type as an intersection; for example $[L,U):=[L,+\infty) \cap (-\infty,U)$ etc. 

Furthermore, as we have only a partial order, it is reasonable to define, for every family $\calF$ of quasi-arithmetic means, the smallest interval-type set containing $\calF$. We will denote such a set by 
\Eq{*}{
\intgen{\calF}:=\bigcap \big\{\mathcal{I} \colon \mathcal{I} \text{ is an interval and } \calF \subset \mathcal{I}\big\}.
}
It has also a natural upward (equivalent) definition
\Eq{E:upwardDef}{
\intgen{\calF}=\bigcup_{{X,Y \in \calF}\atop {X \le Y}} [X,Y].
}
Proof of this equality is elementary and we omit it.

\section{Comparability among quasi-arithmetic means} 
It could happen that two intervals has a non-empty intersection although its sum is not an interval. Indeed, the family
\Eq{*}{
[\QA{f}]^{\ast}:=(-\infty,\QA{f}] \cup [\QA{f},+\infty)
}
is a family of all quasi-arithmetic means which are comparable with $\QA{f}$.
Investigating properties of this set is somehow outside the scope of the present paper, as it is not an interval. Let us just notice that for arithmetic mean $[\QA{x}]^{\ast}$ is a quasi-arithmetic means generated by either convex functions or concave functions, which is the classical application of Jensen inequality.

In fact Jensen inequality is closely related with comparability of quasi-arithmetic means. In what follows we will present a number of equivalent conditions in a series of propositions. They will be uniquely numerated, as we will refer to each of them just by mentioning its identifier.

\begin{prop*}
 Let $f,g \colon I \to \R$  be a continuous and monotone functions. Then $\QA{f} \le \QA{g}$ if and only if 
 \begin{enumerate}[label={\roman*.},series=theoremconditions]
\item \label{CC:gf^-1} $g$ is increasing and $g \circ f^{-1}$ is convex or $g$ is decreasing and $g \circ f^{-1}$ is concave,
\item \label{CC:fg^-1} $f$ is increasing and $f \circ g^{-1}$ is concave or $f$ is decreasing and $f \circ g^{-1}$ is convex.
\end{enumerate}
\end{prop*}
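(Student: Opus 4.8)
The plan is to reduce the comparison inequality to a Jensen-type inequality through the substitution $t_i := f(a_i)$, and then to read off convexity directly. Write $\phi := g \circ f^{-1}$, which is well defined and continuous on the interval $f(I)$, since $f$ is a continuous strictly monotone bijection of $I$ onto $f(I)$ and $g$ is continuous. With this notation $g(a_i) = \phi(t_i)$, and as the $a_i$ range over $I$ the transformed points $t_i$ range freely over the whole interval $f(I)$. The definitions of the two means then rewrite $\QA{f}(a) \le \QA{g}(a)$ as a relation between $\phi$ evaluated at the arithmetic mean of the $t_i$ and the arithmetic mean of the values $\phi(t_i)$.

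First I would establish the equivalence with condition \ref{CC:gf^-1}. Applying $g$ to both sides of $\QA{f}(a) \le \QA{g}(a)$ and using $g\bigl(\QA{f}(a)\bigr) = \phi\bigl(\tfrac{t_1+\cdots+t_n}{n}\bigr)$, the comparison becomes, when $g$ is increasing,
\[
\phi\left(\frac{t_1+\cdots+t_n}{n}\right) \le \frac{\phi(t_1)+\cdots+\phi(t_n)}{n},
\]
and, when $g$ is decreasing, the reversed inequality. Since the $t_i$ are arbitrary in $f(I)$, the condition $\QA{f}\le\QA{g}$ is equivalent to this (reverse) Jensen inequality holding for every $n$ and every choice of points. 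The point that makes both implications clean is that the whole family of these inequalities is equivalent to convexity (resp. concavity) of $\phi$: for the sufficiency direction convexity yields the full discrete Jensen inequality at once, while for the necessity direction I would only need the case $n=2$, which gives midpoint convexity, and then use continuity of $\phi$ to upgrade midpoint convexity to genuine convexity. This is exactly \ref{CC:gf^-1}.

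For \ref{CC:fg^-1} I would run the symmetric argument, substituting $s_i := g(a_i)$ and setting $\psi := f\circ g^{-1}$; applying $f$ to both sides of the comparison now turns it into a Jensen inequality for $\psi$, with the direction governed by the monotonicity of $f$ — $f$ increasing forcing concavity of $\psi$ and $f$ decreasing forcing convexity of $\psi$. Alternatively, one can derive the equivalence of \ref{CC:gf^-1} and \ref{CC:fg^-1} from the elementary fact that inversion interchanges convexity and concavity for increasing homeomorphisms while preserving each of them for decreasing ones, since $f\circ g^{-1} = (g\circ f^{-1})^{-1}$.

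I expect the only real care to be in the monotonicity bookkeeping, namely tracking which of the four sign combinations of $f$ and $g$ sends the comparison to Jensen and which to reverse Jensen, and in the passage from midpoint convexity to convexity, where the continuity of the generators is precisely what is used.
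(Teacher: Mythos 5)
Your argument is correct. Note that the paper itself gives no proof of this proposition --- it is stated as a classical consequence of Jensen's inequality (the text around it explicitly calls the surrounding chain of conditions folk results) --- so there is nothing to compare against; your substitution $t_i=f(a_i)$, reduction to the discrete Jensen inequality for $\phi=g\circ f^{-1}$ with the direction governed by the monotonicity of $g$, and the upgrade from midpoint convexity to convexity via continuity is exactly the standard argument one would supply. The bookkeeping for \ref{CC:fg^-1}, whether done symmetrically or via the fact that inversion swaps convexity and concavity for increasing homeomorphisms and preserves them for decreasing ones, is also handled correctly.
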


In fact this proposition possess a lot of symmetries as we have the well-known equality condition (cf. \cite{Nag30})
\Eq{E:Eqcond}{
\QA{f} &= \QA{g} \iff  
\left( {\text{ there exists }\alpha,\,\beta \in \R\text{ with }\alpha\ne0} \atop { \text{ such that }f=\alpha \cdot g+ \beta} \right).
}

It is easy to observe that $g \circ f^{-1}$ is continuous, so its convexity, $t$-convexity for given $t \in [0,1]$, Jensen convexity ($1/2$-convexity) are all equivalent.
Therefore we obtain a number of conditions which provide comparability of quasi-arithmetic means. This is a folk result in a theory of means
\begin{prop*}
  Let $f,g \colon I \to \R$  be a continuous and monotone functions. Then the following conditions are equivalent to $\QA{f} \le \QA{g}$
 \begin{enumerate}[resume*=theoremconditions]
   \item \label{CC:I^omega} $\QA{f}(a) \le \QA{g}(a)$ for all $a \in \bigcup_{n=1}^{\infty} I^n$;
   \item \label{CC:I^k} $\QA{f}(a) \le \QA{g}(a)$ for some $k \in \N$ and all $a \in I^k$;
   \item \label{CC:I^2weighted-givenxi} $\QA{f}_\xi(a) \le \QA{g}_\xi(a)$ for some $\xi \in (0,1)$ and all $a \in I^2$, where $\QA{f}_\xi(a):=f^{-1}(\xi f(a_1)+(1-\xi)f(a_2))$;
   \item \label{CC:I^2weighted-allxi} $\QA{f}_\xi(a) \le \QA{g}_\xi(a)$ for all $\xi \in (0,1)$ and all $a \in I^2$.
  \end{enumerate}
\end{prop*}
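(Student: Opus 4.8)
The plan is to reduce each of the four conditions to a convexity property of the single continuous function $\varphi:=g\circ f^{-1}$, and then to invoke the observation recorded just before the proposition that, for a continuous function, convexity, $\xi$-convexity for any fixed $\xi\in(0,1)$, and Jensen convexity are mutually equivalent. Condition (iii) is nothing but the definition of the relation $\QA{f}\le\QA{g}$, so the real task is to tie the remaining three conditions to that relation.

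First I would normalise the monotonicity. By the equality condition \eq{E:Eqcond}, taken with $\alpha=-1$ and $\beta=0$, we have $\QA{f}=\QA{-f}$ and $\QA{g}=\QA{-g}$; since every condition in the proposition, and the relation $\QA{f}\le\QA{g}$ itself, is unaffected when $f$ or $g$ is replaced by its negative, there is no loss of generality in assuming that $f$ and $g$ are both increasing. Put $J:=f(I)$, so that $\varphi=g\circ f^{-1}\colon J\to\R$ is continuous and increasing, and substitute $y_i:=f(a_i)$, a bijection of $I$ onto $J$. Applying the increasing function $g$ to both sides turns the equal-weight inequality $\QA{f}(a)\le\QA{g}(a)$ on $I^n$ into
\Eq{*}{
\varphi\left(\tfrac1n\sum_{i=1}^n y_i\right)\le\tfrac1n\sum_{i=1}^n\varphi(y_i),\qquad y_1,\dots,y_n\in J,
}
and turns the weighted inequality $\QA{f}_\xi(a)\le\QA{g}_\xi(a)$ on $I^2$ into the $\xi$-convexity inequality $\varphi(\xi y_1+(1-\xi)y_2)\le\xi\varphi(y_1)+(1-\xi)\varphi(y_2)$.

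With this dictionary the equivalences fall out. Condition (iii), holding for all $n$, is exactly Jensen convexity of $\varphi$, which by continuity is convexity of $\varphi$ --- i.e. condition (i) of the preceding proposition, hence $\QA{f}\le\QA{g}$. Condition (vi) is $\xi$-convexity of $\varphi$ for every $\xi$, again convexity; and condition (v) is $\xi$-convexity for a single $\xi$, which the cited observation upgrades to convexity. So (iii), (v), (vi) are each equivalent to convexity of $\varphi$, and thus to one another and to $\QA{f}\le\QA{g}$.

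The remaining condition (iv) is where the only real manipulation lies. The direction $\QA{f}\le\QA{g}\Rightarrow$ (iv) is trivial, as $\QA{f}\le\QA{g}$ forces the inequality on $I^n$ for every $n$. For the converse I would use a padding argument: assuming $k\ge2$ (the case $k=1$ being vacuous), fix $u,v\in J$ and set $y_1=u$, $y_2=v$, $y_3=\dots=y_k=\tfrac{u+v}2$. The arithmetic mean of the $y_i$ is then $\tfrac{u+v}2$, and the $k$-variable inequality collapses, after the $k-2$ equal terms are combined, to $\varphi(\tfrac{u+v}2)\le\tfrac12\big(\varphi(u)+\varphi(v)\big)$, i.e. to Jensen convexity of $\varphi$; continuity again gives convexity. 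I expect this padding step to be the main obstacle --- it is the one place a genuine idea is needed --- whereas everything else is the substitution above together with the already-granted collapse of the various convexity notions for continuous functions.
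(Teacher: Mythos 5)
Your proof is correct and follows exactly the route the paper intends: the paper omits the argument as folklore, merely remarking beforehand that convexity, $t$-convexity for a fixed $t$, and Jensen convexity all coincide for the continuous function $g\circ f^{-1}$, and your normalisation-plus-substitution reduces each condition to precisely one of those notions, with the padding trick supplying the one step (condition \ref{CC:I^k}) that the remark does not directly cover. Your parenthetical restriction to $k\ge 2$ is indeed necessary, since for $k=1$ that condition holds vacuously for any pair of generators; this is an implicit assumption of the statement rather than a gap in your argument.
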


Additionally we have a condition, that were given by P\'ales \cite{Pal91} (see also \cite{Pas16b})
{\it
 \begin{enumerate}[resume*=theoremconditions]
   \item \label{CC:Pales} $\frac{f(x)-f(y)}{f(x)-f(z)} \ge \frac{g(x)-g(y)}{g(x)-g(z)} $ for all $x,\,y,\,z \in I$, $x<y<z$.
  \end{enumerate}
}
Furthermore, we know that a differentiable function is convex/concave if and only if its derivative is non-decreasing/non-increasing. Applying this we get next comparability conditions.

\begin{prop*}
  Let $f,g \colon I \to \R$  be a monotone and differentiable functions with $f'\cdot g' \ne 0$. Then $\QA{f} \le \QA{g}$
if and only if either
\begin{enumerate}[resume*=theoremconditions]
   \item \label{CC:f'/g'-samemonotonicity} $f$ and $g$ are of the same monotonicity (both increasing or both decreasing) and $f'(x)/g'(x)$ is non-increasing (equivalently $g'(x)/f'(x)$ is non-decreasing);
   \item \label{CC:f'/g'-conversemonotonicity} $f$ and $g$ are of the converse monotonicity (one increasing, second decreasing) and $f'(x)/g'(x)$ is non-decreasing (equivalently $g'(x)/f'(x)$ is non-increasing).
  \end{enumerate}
\end{prop*}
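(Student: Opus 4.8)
The plan is to reduce everything to the first proposition of this section, condition~\ref{CC:gf^-1}, which already characterizes $\QA{f} \le \QA{g}$ through convexity or concavity of $g \circ f^{-1}$, and then to translate that convexity statement into the asserted monotonicity of the derivative ratio using the differentiability hypothesis. Since $f$ is monotone and differentiable with $f' \ne 0$, the inverse function theorem guarantees that $f^{-1}$ is differentiable on $f(I)$ with $(f^{-1})'(y) = 1/f'(f^{-1}(y))$. Consequently $h := g \circ f^{-1}$ is differentiable and, writing $x = f^{-1}(y)$, the chain rule gives
\Eq{*}{
h'(y) = g'(f^{-1}(y)) \cdot (f^{-1})'(y) = \frac{g'(x)}{f'(x)}.
}

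First I would record the two facts that drive the argument. By the remark preceding the statement, a differentiable function is convex (resp.\ concave) exactly when its derivative is non-decreasing (resp.\ non-increasing); thus the convexity or concavity of $h$ demanded by \ref{CC:gf^-1} is equivalent to a monotonicity property of the map $y \mapsto g'(x)/f'(x)$. Second, since $y = f(x)$ is a monotone bijection of $I$ onto $f(I)$, composition with this change of variable either preserves or reverses monotonicity: if $f$ is increasing then $y \mapsto x$ is increasing, so $y \mapsto g'(x)/f'(x)$ is non-decreasing if and only if $x \mapsto g'(x)/f'(x)$ is non-decreasing, while if $f$ is decreasing the two monotonicities are opposite.

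Then I would run the four subcases obtained by crossing the monotonicity of $g$ (which decides convex versus concave in \ref{CC:gf^-1}) with the monotonicity of $f$ (which decides whether the change of variable flips the order). For instance, if $f$ and $g$ are both increasing, \ref{CC:gf^-1} requires $h$ convex, i.e.\ $h'$ non-decreasing in $y$, i.e.\ $g'/f'$ non-decreasing in $x$, which is exactly \ref{CC:f'/g'-samemonotonicity}; the remaining three combinations unwind in the same mechanical way, the two ``same monotonicity'' cases landing on \ref{CC:f'/g'-samemonotonicity} and the two ``converse monotonicity'' cases on \ref{CC:f'/g'-conversemonotonicity}. The parenthetical equivalence ``$g'/f'$ non-decreasing $\iff$ $f'/g'$ non-increasing'' is immediate: $f' \cdot g' \ne 0$ ensures both ratios are well defined and never change sign, so one is non-decreasing precisely when the other is non-increasing.

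I expect no serious obstacle here; the only genuine care is bookkeeping. The hypothesis $f' \cdot g' \ne 0$ must be used twice, once to invoke the inverse function theorem for $f^{-1}$ and once to pass between $g'/f'$ and $f'/g'$, and one must track carefully that a decreasing $f$ reverses the monotonicity introduced by the change of variable, so that the convexity in \ref{CC:gf^-1} does indeed translate into the correctly oriented monotonicity of the derivative ratio in each of the four cases.
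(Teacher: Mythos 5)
Your proposal is correct and follows exactly the route the paper itself indicates (the paper states this proposition without a detailed proof, justifying it by the remark that a differentiable function is convex/concave iff its derivative is non-decreasing/non-increasing, applied to condition~\ref{CC:gf^-1}). Your computation $h'(y)=g'(x)/f'(x)$ for $h=g\circ f^{-1}$, the four-case bookkeeping on the monotonicities of $f$ and $g$, and the sign-based passage between $g'/f'$ and $f'/g'$ all check out.
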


Now we turn into the result of Mikusi\'nski \cite{Mik48}. He, and independently \L{}ojasiewicz (compare \cite[footnote 2]{Mik48}), expressed handy tool to compare quasi-arithmetic means in terms of operator $f \mapsto f''/f'$ (the negative of this operator is used to be called an Arrow-Pratt index). More precisely their result reads
\begin{prop*}
Let $I$ be an interval, $f,\,g \in \mathcal{C}^{2}(I)$, $f' \cdot g' \ne 0$ on $I$. 
Then $\QA{f} \le \QA{g}$ if and only if
\begin{enumerate}[resume*=theoremconditions]
   \item \label{CC:Mikusinski} $\frac{f''(x)}{f'(x)} \le \frac{g''(x)}{g'(x)}$ for all $x \in I$.
  \end{enumerate}
\end{prop*}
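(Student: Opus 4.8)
The plan is to deduce the statement from the immediately preceding proposition, whose conditions \ref{CC:f'/g'-samemonotonicity} and \ref{CC:f'/g'-conversemonotonicity} already characterize $\QA{f}\le\QA{g}$ through the monotonicity of the ratio $f'/g'$. Since $f,g\in\mathcal{C}^{2}(I)$ and $f'\cdot g'\ne0$, both $f'$ and $g'$ are continuous and nonvanishing, hence of constant sign on the interval $I$; this is exactly what lets me pass from the differentiable proposition to the present one. The bridge between the two formulations is the classical observation that $f\mapsto f''/f'$ is, up to sign bookkeeping, a logarithmic derivative.

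First I would record the identity $\frac{f''(x)}{f'(x)}=\big(\ln\abs{f'(x)}\big)'$, valid wherever $f'\ne0$, and likewise for $g$. Subtracting, condition \ref{CC:Mikusinski} becomes
\Eq{*}{
\Big(\ln\abs{g'(x)}-\ln\abs{f'(x)}\Big)'\ge0 \quad\text{for all }x\in I,
}
that is, $\ln\abs{g'/f'}$ is non-decreasing on $I$. Here I use that a $\mathcal{C}^{1}$ function is non-decreasing precisely when its derivative is everywhere non-negative; the function $\ln\abs{g'/f'}$ is indeed $\mathcal{C}^{1}$ because $f',g'$ are $\mathcal{C}^{1}$ and nonvanishing. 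Because $\ln$ is strictly increasing and $\abs{g'/f'}>0$, this is in turn equivalent to $\abs{g'/f'}$ being non-decreasing on $I$.

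It then remains to check that ``$\abs{g'/f'}$ non-decreasing'' coincides with the disjunction of \ref{CC:f'/g'-samemonotonicity} and \ref{CC:f'/g'-conversemonotonicity}. I would split according to the common sign of $g'/f'$. If $f$ and $g$ share monotonicity then $g'/f'>0$, so $\abs{g'/f'}=g'/f'$ and the condition reads ``$g'/f'$ non-decreasing'', which is \ref{CC:f'/g'-samemonotonicity}. If they have opposite monotonicity then $g'/f'<0$, so $\abs{g'/f'}=-g'/f'$, and ``$-g'/f'$ non-decreasing'' says ``$g'/f'$ non-increasing'', i.e.\ \ref{CC:f'/g'-conversemonotonicity}. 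Invoking the preceding proposition closes the equivalence in both cases.

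The argument is short because the real content has been offloaded to the earlier propositions; the only point demanding care is the bookkeeping of signs, and in particular the harmless but necessary remark that $f',g'$ keep a constant sign, so that the passage between the monotonicity of $g'/f'$ and that of $\abs{g'/f'}$ (and, implicitly, of $f'/g'$, in which form the previous proposition is also phrased) is legitimate. I expect no genuine obstacle beyond this sign bookkeeping.
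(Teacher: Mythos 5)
Your proof is correct. The paper itself gives no proof of this proposition --- it is quoted as a known result of Mikusi\'nski and \L{}ojasiewicz \cite{Mik48} --- but your derivation from conditions \ref{CC:f'/g'-samemonotonicity} and \ref{CC:f'/g'-conversemonotonicity} via the identity $f''/f'=(\ln|f'|)'$ is exactly the standard argument, and your sign bookkeeping (constant sign of $f'$ and $g'$ on the interval, the case split on the sign of $g'/f'$) is sound and complete.
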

Using this result we immediately obtain some ``Mikusi\'nski-type intervals''
\Eq{*}{
\tilde{\mathcal{M}}(x_0,U):=\left\{\QA{f} \colon {{f \text{ is twice continuously differentiable in some}} \atop
{\text{neighborhood of }x_0,\,f'(x_0) \ne 0,\,\frac{f''(x_0)}{f'(x_0)}\in U}}\right\},
}
where $x_0 \in I$ and $U \subset \R$ is an interval.
Nevertheless $\tilde{\mathcal{M}}(x_0,U)$ is usually not an interval-type set, therefore we extend this set to an interval in the way that was described in the introduction
\Eq{*}{
\mathcal{M}(x_0,U):=\intgen{\tilde{\mathcal{M}}(x_0,U)}.
}

This lead us to the following problem. A family $\tilde{\mathcal{M}}(x_0,U)$ contains only $\mathcal{C}^2$ functions around $x_0$ with $f'(x_0) \ne 0$.
But what about $\mathcal{M}(x_0,U)$?

By \eq{E:upwardDef} we know that for all $\QA{h} \in \mathcal{M}(x_0,U)$ we have 
\Eq{E:sandwich}{
\QA{f}\le \QA{h}\le \QA{g} 
}
for some $f,\,g \in \mathcal{C}^2(V)$, $f'\cdot g' \ne 0$ and open interval $V \ni x_0$.

In fact it does not imply that the second derivative of $h$ at $x_0$ exists, which can be illustrated in a simple example

\begin{exa}
 Let $I=(0,+\infty)$, $f(x)=x$, $g(x)=x^2$, $x_0=1$,
\Eq{*}{
h(x)=\begin{cases} 
            x & x \in (0,1) \\
	    \frac{x^2+1}2 & x \in (1,2)
     \end{cases}
}
Then, by \ref{CC:f'/g'-samemonotonicity}, assertion \eq{E:sandwich} holds but $h$ is $\mathcal{C}^1$ only.
\end{exa}

Despiting this drawback, it can be proved that if $f,\,g \in \mathcal{C}^2(I)$ with nonvanishing derivative and \eq{E:sandwich} holds, then $h$ is continuously differentiable for all $x \in U$ and also $h'$ is nowhere vanishing. 

Nevertheless to obtain an interval-type set assumption on $f$, $g$, and $h$ have to be the same. Thus we want to prove that if $f$ and $g$ are continuously differentiable with nonvanishing derivative, then so is $h$. Equivalently, family of quasi-arithmetic means generated by $\mathcal{C}^1$ functions with nonvanishing derivative is an interval (it will be done in Theorem~\ref{thm:dernon_van}). 

\section{Interval-type sets in a family quasi-arithmetic means} 
In the following section we will prove a number of examples of interval-type sets involving vary smoothness assumptions of generating functions. 
Let us first prove some abstract theorem.

\begin{thm}
\label{thm:Error}
Let $I$ be a compact interval, $x_0\in I$, $f_0 \colon I \to \R$ with $f_0(x_0)=0$. Let $\calF \subset \mathcal{C}(I)$ be an interval
such that 
$\calF \subseteq o(f_0)$ in a right/left neighborhood of $x_0$.
Then the family 
\Eq{*}{
\QA{f_0+\calF} :=\{\QA{f_0+f} \colon f \in \calF \text{ and } f_0+f \text{ is strictly monotone on }I\}.
}
is an interval.
\end{thm}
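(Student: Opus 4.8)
The plan is to verify the interval property of $\QA{f_0+\calF}$ straight from the definition. Fix $f,g\in\calF$ with $f_0+f,\,f_0+g$ strictly monotone, let $\QA{h}$ be an arbitrary quasi-arithmetic mean with $\QA{f_0+f}\le\QA{h}\le\QA{f_0+g}$, and try to produce $\tilde f\in\calF$ with $\QA{f_0+\tilde f}=\QA{h}$. Reading ``$\calF$ is an interval'' as interval-type for the pointwise order on $\mathcal{C}(I)$ (the natural choice, since the members of $\calF$ need not be monotone — only $f_0+f$ is required to be), it suffices to exhibit \emph{one} representative of $\QA{h}$, i.e.\ a function $\alpha h+\beta$ with $\alpha\ne0$ which generates the same mean by \eq{E:Eqcond}, that is pointwise squeezed, $f_0+f\le\alpha h+\beta\le f_0+g$ on $I$. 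Then $\tilde f:=\alpha h+\beta-f_0$ lies pointwise between $f$ and $g$, hence in $\calF$, while $f_0+\tilde f=\alpha h+\beta$ is strictly monotone and generates $\QA{h}$, so $\QA{h}\in\QA{f_0+\calF}$.

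Before the main step I would dispose of the normalisations. Replacing $f_0,\calF$ by $-f_0,-\calF$ if necessary, I may assume $f_0$ is increasing; using $\QA{\phi}=\QA{-\phi}$ I may take $f_0+f,\,f_0+g$ and $h$ all increasing. Membership $\calF\subseteq o(f_0)$ forces $f(x_0)=g(x_0)=0$ by continuity (as $f_0(x)\to f_0(x_0)=0$), and shows $f_0+f,\,f_0+g$ pass through the value $0$ at $x_0$ in the increasing sense, so they are genuinely increasing on all of $I$. Subtracting $h(x_0)$ fixes $\beta$ so that all four functions vanish at $x_0$; only the scale $\alpha>0$ remains free.

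The heart of the argument is the passage from the mean (convexity) order to the pointwise order. By condition~\ref{CC:gf^-1} the two hypotheses say exactly that $C:=h\circ(f_0+f)^{-1}$ and $D:=(f_0+g)\circ h^{-1}$ are convex and increasing with $C(0)=D(0)=0$. The decisive computation is that the composite $E:=D\circ C=(f_0+g)\circ(f_0+f)^{-1}$ is tangent to the identity at $0$: its one-sided derivative is $E'_+(0)=\lim_{x\to x_0^+}\frac{f_0(x)+g(x)}{f_0(x)+f(x)}=1$ precisely because $f,g\in o(f_0)$ on the right neighbourhood. Since one-sided derivatives of convex functions always exist and factor through composition, this yields $C'_+(0)\,D'_+(0)=1$ with both factors finite and positive. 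Choosing $\alpha:=D'_+(0)=1/C'_+(0)$, the slope $1$ lies in the subdifferential $[\,\alpha C'_-(0),\,\alpha C'_+(0)\,]$ of $\alpha C$ at $0$, and $\alpha$ lies in the subdifferential $[\,D'_-(0),\,D'_+(0)\,]$ of $D$ at $0$; the supporting-line inequality for convex functions then gives $\alpha C(t)\ge t$ and $D(s)\ge\alpha s$ for all admissible arguments, which are exactly $\alpha h\ge f_0+f$ and $\alpha h\le f_0+g$ pointwise. (For a left neighbourhood one runs the symmetric argument with $E'_-(0)=1$ and $\alpha=D'_-(0)$.) Setting $\tilde f:=\alpha h-f_0$ then finishes the proof as in the first paragraph.

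I expect the main obstacle to be exactly this conversion of the convexity inequalities into a pointwise sandwich. The mean order is invariant under independent affine rescalings of each generator, so a pointwise estimate cannot hold for an arbitrary representative; one must spend the single scale parameter $\alpha$ correctly, and the only input that pins it down is the one-sided smallness $\calF\subseteq o(f_0)$, which forces $E$ to be tangent to the identity and makes the supporting lines of $\alpha C$ and $D$ at the origin consistent. Secondary technical points — the case where $x_0$ is an endpoint of $I$ (where only one-sided derivatives are meaningful anyway), the finiteness and positivity of $C'_+(0),D'_+(0)$, and checking strict monotonicity of the chosen representative — I expect to be routine once the tangency identity $C'_+(0)D'_+(0)=1$ and the supporting-line estimate are in place.
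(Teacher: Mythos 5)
Your argument is correct, but it follows a genuinely different route from the paper's. The paper normalizes the generators at the two points $x_0,x_1$, invokes P\'ales' ratio condition \ref{CC:Pales} to squeeze the normalized $h$ between $\hat f$ and $\hat g$ separately on $(x_0,x_1)$ and on $(x_1,\sup I)$, selects for each $x$ an intermediate member $r_{\beta(x,x_1)}$ of the pencil joining $r_1$ and $r_2$, and then performs a limiting procedure $s\to x_0$ on the rescaled family $h_s=f_0(s)h$ to land in $f_0+\calF$. You instead work with the convexity form \ref{CC:gf^-1} of the comparison: the hypothesis $\calF\subseteq o(f_0)$ forces $E=(f_0+g)\circ(f_0+f)^{-1}$ to be tangent to the identity at $0$, the one-sided chain rule splits this as $C'_+(0)D'_+(0)=1$ with both factors finite and positive, and the supporting-line inequality for the convex functions $\alpha C$ and $D$ at the origin yields the pointwise sandwich $f_0+f\le\alpha h\le f_0+g$ on all of $I$ in one stroke, with the correct scale $\alpha=D'_+(0)$ exhibited explicitly. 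What your approach buys is the elimination of the limit $s\to x_0$ and of the pointwise selection $\beta(x,x_1)$ (whose membership in $\calF$ the paper justifies only loosely); the single delicate step is the one-sided chain rule and the positivity/finiteness of $C'_+(0),D'_+(0)$, which you address correctly since $C$ is strictly increasing with $C(0)=0$ and the difference quotients of a convex function are monotone. The paper's method, on the other hand, uses only the elementary ratio condition \ref{CC:Pales} and no differentiation of the conjugating maps. Both arguments ultimately rest on the same mechanism: the one-sided smallness $\calF\subseteq o(f_0)$ pins down the unique admissible affine normalization of $h$ permitted by \eq{E:Eqcond}.
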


\begin{proof}
We want to bind the cases where $x_0$ is in the interior of the interval and is the endpoint. In the proof we will concern right neighborhood of the point, 
therefore we have $x_0 \ne \sup I$. Second case is completely analogous. Similarly assume that $f_0$ is increasing.

Take any $x_1 \in I$ such that $x_1>x_0$. 
Let $r_1,r_2 \in \calF$ and $f:=f_0+r_1$, $g:=f_0+r_2$. By the definition there holds $f(x_0)=g(x_0)=0$.
Denote
\Eq{*}{
\hat f(x):=\frac{f(x)-f(x_0)}{f(x_1)-f(x_0)}=\frac{f(x)}{f(x_1)}, \quad \hat g(x):=\frac{g(x)-g(x_0)}{g(x_1)-g(x_0)}=\frac{g(x)}{g(x_1)}.
}
Let us consider an arbitrary function $\tilde h \colon I \to \R$ satisfying 
\Eq{*}{
\QA{f} \le \QA{\tilde h} \le \QA{g}.
}
By \eqref{E:Eqcond}, there exist a unique function $h$ such that $h(x_0)=0$, $h(x_1)=1$, and $\QA{\tilde h}=\QA{h}$. 

Then, by \ref{CC:Pales}, we get $\hat g(x) \le h(x) \le \hat f(x)$ for all $x \in (x_0,x_1)$.
Thus
\Eq{*}{
\frac{g(x)}{g(x_1)} \le h(x) \le \frac{f(x)}{f(x_1)}, \quad x \in (x_0,x_1).
}
Therefore
\Eq{*}{
\frac{f_0(x)+r_2(x)}{g(x_1)} \le h(x) \le \frac{f_0(x)+r_1(x)}{f(x_1)}, \quad x \in (x_0,x_1).
}
Then
\Eq{E_FF}{
\frac{f_0(x)}{g(x_1)}+\frac{1}{g(x_1)} \cdot r_2(x) \le h(x) \le \frac{f_0(x)}{f(x_1)} +\frac{1}{f(x_1)} \cdot r_1(x)
}
It implies
\Eq{*}{
\left( \frac{1}{g(x_1)}-\frac{1}{f(x_1)}\right) f_0(x) \le \frac{1}{f(x_1)}\cdot r_1(x)-\frac{1}{g(x_1)}\cdot r_2(x)
}

But $\frac{1}{f(x_1)}r_1(x)-\frac{1}{g(x_1)}r_2(x) \in o(f_0)$ in a right neighborhood of $x_0$. 
Thus 
\Eq{*}{
\frac{1}{g(x_1)}-\frac{1}{f(x_1)}\le 0.
}
Consequently $g(x_1)\ge f(x_1)$. As $x_1$ was an arbitrary number greater than $x_0$ we obtain
$g(x) \ge f(x)$ for $x>x_0$. Thus
\Eq{r12}{
r_2(x) \ge r_1(x) \text{ for }x>x_0.
}

Now observe that
\Eq{*}{
\frac{f(x)}{f(x_1)} \le h(x) &\le \frac{g(x)}{g(x_1)} \text{ for }x>x_1. \\
\frac{f_0(x)+r_1(x)}{f_0(x_1)+r_1(x_1)}  \le h(x) &\le \frac{f_0(x)+r_2(x)}{f_0(x_1)+r_2(x_1)} \text{ for }x>x_1.
}
Denote
\Eq{*}{
r_\beta=(\beta-1)r_2+(2-\beta)r_1 \in \calF,\quad \beta \in[1,2].
}
Then there exists $\beta(x,x_1) \in [1,2]$ such that
\Eq{*}{
h(x) = \frac{f_0(x)+r_{\beta(x,x_1)}(x)}{f_0(x_1)+r_{\beta(x,x_1)}(x_1)} \text{ for }x>x_1.
}
For $x_1\ge x_0$ and $x \ge x_0$ we have 
\Eq{*}{
\calF(x) \ni r_1(x) \le r_{\beta(x,x_1)}(x) \le r_2(x) \in \calF(x).
}
Thus $r_{\beta(x,x_1)}(x) \in \calF(x)$ (it means that $\calF$ is considered as variable of $x$). Similarly $r_{\beta(x,x_1)}(x_1) \in \calF(x_1)$.
Furthermore, by Taylor's theorem, $\frac{1}{p+\calF(x)}=\tfrac 1p +\calF(x)=\tfrac 1p+o(x-x_0)$. Thus, for $x>x_1$,
\Eq{*}{
h(x) &= \frac{f_0(x)+r_{\beta(x,x_1)}(x)}{f_0(x_1)+\calF(x_1)}=(f_0(x)+r_{\beta(x,x_1)}(x))\cdot(\frac{1}{f_0(x_1)}+o(x_1-x_0))\\
}
Recall that $x_1$ was fixed but arbitrary, so we can substitute $x_1 \leftarrow s$, where $s>x_0$. 
Furthermore we can consider $h_s(x):=f_0(s) \cdot h(x)$, as their generate the same quasi-arithetic mean.
Then, for $x>s>x_0$,
\Eq{*}{
h_s(x)&=f_0(s) \cdot (f_0(x)+r_{\beta(x,s)}(x))\cdot(\frac{1}{f_0(s)}+o(s-x_0))\\
&=(f_0(x)+r_{\beta(x,s)}(x))\cdot(1+f_0(s)\cdot o(s-x_0))\\
&=(f_0(x)+r_{\beta(x,s)}(x))\cdot (1+o(f_0(s) \cdot (s-x_0))).
}
Thus we get a family of functions $\mathcal{H}=\{h_{s}(x)\}_{s>x_0}$
\Eq{*}{
h_s(x)=(f_0(x)+r_{\beta(x,s)}(x))\cdot (1+o(f_0(s) \cdot (s-x_0))),\,x>s>x_0.
}
So we get
\Eq{*}{
h_s(x) &\ge (f_0(x)+r_{1}(x))\cdot (1+o(f_0(s) \cdot (s-x_0))),\,x>s>x_0;\\
h_s(x) &\le (f_0(x)+r_{2}(x))\cdot (1+o(f_0(s) \cdot (s-x_0))),\,x>s>x_0.
}
We can now pass $s \to x_0$ and obtain 
\Eq{*}{
h_{x_0}(x):=\lim_{s \to x_0}h_s(x) \in [f_0(x)+r_1(x),f_0(x)+r_2(x)],\,x>x_0.
}
Therefore $h_{x_0} \in f_0+\calF$. Furthermore, as $\QA{h}=\QA{h_s}$ for all $s>x_0$ then, applying \ref{CC:Pales}, $\QA{h_{x_0}}=\QA{h}$.

Finally we obtain $\QA{\tilde h}=\QA{h}=\QA{h_{x_0}} \in \QA{f_0+\calF}$.
\end{proof}

This theorem has a very useful corollary
\begin{cor}
\label{cor:onesided_vanishing}
 Let $I$ be an interval, $x_0 \in I$. The family of quasi-arithmetic means generated by right-(left-)sided differentiable function at $x_0$ with $f'_+(x_0)= 0$ ($f'_-(x_0)= 0$) is an interval-type set.
\end{cor}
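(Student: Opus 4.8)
The plan is to deduce the right-sided statement (the left-sided one being entirely symmetric) directly, by reusing the opening normalisation from the proof of Theorem~\ref{thm:Error} together with the P\'ales condition~\ref{CC:Pales}; the point is that here the comparison takes place on the scale $o(x-x_0)$, so the delicate error-term analysis of the theorem collapses into a single squeeze. Write $\mathcal{S}$ for the family of quasi-arithmetic means admitting a generator that is right-differentiable at $x_0$ with vanishing right derivative. To see that $\mathcal{S}$ is an interval-type set I fix generators $f,g$ with $\QA{f},\QA{g}\in\mathcal{S}$ and an arbitrary quasi-arithmetic mean $\QA{\tilde h}$ satisfying $\QA{f}\le \QA{\tilde h}\le \QA{g}$, and I seek a generator of $\QA{\tilde h}$ that is right-differentiable at $x_0$ with right derivative $0$.

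First I would normalise. The property $f'_+(x_0)=0$ is invariant under $f\mapsto\alpha f+\beta$ and under $f\mapsto -f$, so by \eqref{E:Eqcond} I may assume $f$ and $g$ are increasing with $f(x_0)=g(x_0)=0$; then $f(x)=o(x-x_0)$ and $g(x)=o(x-x_0)$ as $x\to x_0^+$. Since we are in the right-sided case, $x_0\ne\sup I$, so I fix some $x_1\in I$ with $x_1>x_0$ and, again by \eqref{E:Eqcond}, take the unique generator $h$ of $\QA{\tilde h}$ with $h(x_0)=0$ and $h(x_1)=1$. Being a generator, $h$ is continuous and strictly monotone, and this normalisation makes it increasing, whence $h\ge 0$ on $(x_0,x_1)$.

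The decisive step is the sandwich already recorded in the proof of Theorem~\ref{thm:Error}: feeding $\QA{f}\le \QA{\tilde h}\le \QA{g}$ into the P\'ales condition~\ref{CC:Pales} gives
\Eq{*}{
\frac{g(x)}{g(x_1)}\le h(x)\le\frac{f(x)}{f(x_1)},\qquad x\in(x_0,x_1).
}
As $f(x_1)$ and $g(x_1)$ are fixed positive constants while $f(x),g(x)=o(x-x_0)$, both bounding expressions are $o(x-x_0)$; dividing by $x-x_0$ and letting $x\to x_0^+$ squeezes $\tfrac{h(x)}{x-x_0}\to 0$. Thus $h$ is right-differentiable at $x_0$ with $h'_+(x_0)=0$, so $\QA{\tilde h}=\QA{h}\in\mathcal{S}$, which is precisely the interval-type property.

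I expect the only genuine care to be needed in the bookkeeping of the normalisations and in the orientation of the inequalities supplied by~\ref{CC:Pales} --- specifically, the fact that the larger mean $\QA{g}$ produces the pointwise-smaller normalised generator on $(x_0,x_1)$, which is exactly the inequality exploited in the proof of Theorem~\ref{thm:Error}. It is worth noting that the squeeze really only uses the upper bound coming from $f$ together with $h\ge 0$, so the hypothesis enters solely through the lower comparand; the left-sided statement follows verbatim after exchanging the right neighbourhood of $x_0$ for the left one.
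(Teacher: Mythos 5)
Your argument is correct, but it follows a genuinely more direct route than the paper. The paper obtains this corollary as a two-line application of the abstract Theorem~\ref{thm:Error}: it normalises $f(x_0)=g(x_0)=1$, sets $f_0\equiv 1$ and $\calF=o(x-x_0)$, notes $f,g\in f_0+\calF$, and invokes the theorem to conclude $h\in 1+o(x-x_0)$, hence $h'_+(x_0)=0$. You instead extract only the opening step of the proof of that theorem --- the P\'ales sandwich $g(x)/g(x_1)\le h(x)\le f(x)/f(x_1)$ on $(x_0,x_1)$ obtained from \ref{CC:Pales} --- and observe that with the normalisation $f(x_0)=g(x_0)=0$ both bounding functions are already $o(x-x_0)$, so a single squeeze yields $h'_+(x_0)=0$; the error-term analysis that occupies the bulk of Theorem~\ref{thm:Error} is never needed. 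All the individual steps check out: the affine normalisations via \eqref{E:Eqcond} preserve the vanishing one-sided derivative, the orientation of the inequalities from \ref{CC:Pales} is the same one recorded in the paper's proof of Theorem~\ref{thm:Error}, $f(x_1)$ and $g(x_1)$ are positive constants, and $h\ge 0$ on $(x_0,x_1)$ because $h$ is increasing with $h(x_0)=0$. What your direct argument buys is self-containedness; it also sidesteps the slightly awkward feature of the paper's application, namely that $f_0\equiv 1$ does not literally satisfy the hypothesis $f_0(x_0)=0$ of Theorem~\ref{thm:Error} (nor is $I$ assumed compact in the corollary). What the paper's route buys is uniformity: Theorem~\ref{thm:Error} treats a whole scale of error classes $o(f_0)$ at once, of which this corollary is the simplest instance.
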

\begin{proof}
Let $\QA{f} \le \QA{h} \le \QA{g}$. Suppose $f(x_0)=g(x_0)=1$. We know that 
\Eq{*}{
f(x)=1+o(x-x_0)\quad \text{ and }\quad g(x)=1+o(x-x_0) \quad \text{ for }x>x_0.
}
Take $f_0 \equiv 1$ and $\calF=o(x-x_0)$. Then the pair $f_0$, $\calF$ satisfies all conditions of Theorem~\ref{thm:Error}. 
Furthermore $f,g \in f_0+\calF$, therefore we get 
\Eq{*}{
h \in f_0+\calF=1+o(x-x_0) \text{ for }x>x_0.
}
It implies that $h'_+(x_0)$ exists and $h'_+(x_0)=0$.
\end{proof}

\subsection{Interval-type sets involving smoothness assumptions}
In the following section we are going to present some interval-type sets in a family of quasi-arithmetic means involving smoothness assumptions of their generating functions. Recall that all means are considered on a common interval $I$.

First result will concern existence of one-sided derivative at certain point
\begin{thm}
\label{thm:onesided_nonvanishing}
Let $x_0 \in \interior I$ and $f \colon I \to \R$ be a continuous and monotone functions which has a right-(left-)sided differentiable function at $x_0$ with $f'_{-}(x_0)\ne 0$ [$f'_+(x_0)\ne 0$]. If $\QA{g} \in [\QA{f}]^{\ast}$ for some $g \colon I \to \R$ then $g$ is right-(left-)sided differentiable at $x_0$ too and $g'_{-}(x_0)\ne 0$ [$g'_+(x_0)\ne 0$].
\end{thm}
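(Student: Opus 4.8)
The plan is to normalize $f$ and $g$, reduce the two comparability alternatives to a single quotient, and then exploit the monotonicity of the ``slope-from-the-corner'' function of a convex map. By \eqref{E:Eqcond} I may replace $f$ by $f-f(x_0)$ and, if necessary, by $-f$ without altering $\QA{f}$ or the hypothesis $f'_+(x_0)\ne 0$; doing the same to $g$, I assume from now on that $f,g$ are both increasing and $f(x_0)=g(x_0)=0$. The membership $\QA{g}\in[\QA{f}]^{\ast}$ means $\QA{f}\le\QA{g}$ or $\QA{g}\le\QA{f}$, and in either case condition \ref{CC:gf^-1} tells me that one of the compositions $g\circ f^{-1}$, $f\circ g^{-1}$ is convex on the corresponding interval (which is open near $0$, since $x_0\in\interior I$) and vanishes at $0$. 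I would then track the auxiliary quotient $\phi:=g/f$, well defined and strictly positive on a punctured neighbourhood of $x_0$ because both factors are increasing and vanish there.

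The first key step is the elementary fact that for a convex function $\psi$ with $\psi(0)=0$ the slope $u\mapsto\psi(u)/u$ is non-decreasing across $0$. Writing $\phi(x)=\psi(f(x))/f(x)$ with $\psi=g\circ f^{-1}$ in the case $\QA{f}\le\QA{g}$, and $\phi(x)=g(x)/\chi(g(x))$ with $\chi=f\circ g^{-1}$ in the case $\QA{g}\le\QA{f}$, and using that $f$ (resp. $g$) is an increasing homeomorphism sending $x_0$ to $0$, I obtain that $\phi$ is monotone on a two-sided punctured neighbourhood of $x_0$: non-decreasing in the first case and non-increasing in the second. In particular the one-sided limit $L:=\lim_{x\to x_0^{+}}\phi(x)$ exists in $[0,+\infty]$.

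Next I claim $L\in(0,+\infty)$, and this is exactly where the assumption $x_0\in\interior I$ is indispensable. Since $\phi$ is monotone and strictly positive on both sides of $x_0$, its right-hand limit is squeezed between values taken on either side: in the non-decreasing case $0<\phi(x')\le L\le\phi(x'')<\infty$ for any $x'<x_0<x''$, and the non-increasing case is handled by the same squeezing with the inequalities reversed. Finally I would write, for $x>x_0$,
\[
\frac{g(x)-g(x_0)}{x-x_0}=\phi(x)\cdot\frac{f(x)-f(x_0)}{x-x_0},
\]
and pass to the limit $x\to x_0^{+}$: the last factor tends to $f'_+(x_0)$ while $\phi(x)\to L$, so $g'_+(x_0)=L\,f'_+(x_0)$ exists and is nonzero. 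The left-sided statement follows verbatim after reflecting at $x_0$.

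The hard part will be the middle claim $L\ne 0$ (together with $L<\infty$): extracting one-sided differentiability of $g$ is cheap once $\phi$ is known to be monotone, but keeping the resulting derivative away from $0$ forces me to use the behaviour of $\phi$ on the \emph{other} side of $x_0$, which is available only because $x_0$ is interior. At an endpoint the conclusion genuinely collapses, as the vanishing-derivative phenomenon recorded in Corollary~\ref{cor:onesided_vanishing} demonstrates.
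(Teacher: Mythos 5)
Your proposal is correct and is essentially the paper's own argument: your quotient $\phi=g/f$ is exactly the factor $\frac{(g\circ f^{-1})(y)}{y}$ (with $y=f(x)$) that the paper isolates when it splits the difference quotient of $g$ through $f$, and your monotone-slope argument is the standard proof that $(g\circ f^{-1})'_+(0)$ exists and is nonzero, which the paper invokes directly from convexity. The only difference is one of exposition: you make explicit the step the paper states tersely, namely that interiority of $x_0$ (i.e.\ having the convex function defined on both sides of $0$) is what pins the limit $L$ in $(0,+\infty)$.
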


\begin{proof}
By \eq{E:Eqcond} we may assume $f(x_0)=g(x_0)=0$. Then we have
\Eq{*}{
\lim_{x \to x_0^+} \frac{g(x)-g(x_0)}{x-x_0}
&=\lim_{y \to 0^+} \frac{g\circ f^{-1}(y)}{f^{-1}(y)-x_0}\\
&=\lim_{y \to 0^+} \frac{g\circ f^{-1}(y)}{y} \cdot \lim_{y \to 0^+} \frac{y}{f^{-1}(y)-x_0}\\
&=\lim_{y \to 0^+} \frac{g\circ f^{-1}(y)}{y} \cdot \lim_{x \to x_0^+} \frac{f(x)}{x-x_0}
}
But $g\circ f^{-1}(0)=g(x_0)=0$. Moreover, as $\QA{g}$ is comparable with $\QA{f}$, we know that $g \circ f^{-1}$ is either convex or concave (see \ref{CC:gf^-1} and \ref{CC:fg^-1}).
In particular there exists a one-side derivative $(g \circ f^{-1})_+(0)$. Moreover, as $g \circ f^{-1}$ is monotone and convex in some neighborhood of $0$, we get $(g \circ f^{-1})_+(x_0)\ne 0$. Furthermore $f'_+(x_0)$ exists and is nonzero.

Finally we obtain that there exists $g'_+(x_0)$ and
\Eq{*}{
g'_+(x_0)=(g\circ f^{-1})'_+(0) \cdot f'_+(x_0) \ne 0.
}
\end{proof}

Having this already proved we have an immediate corollary
\begin{cor}
\label{cor:onesided}
 Quasi-arithmetic means generated by functions which are right-(left-)sided differentiable functions at certain point $x_0 \in I$ with $f'_{-}(x_0)\ne 0$ [$f'_+(x_0)\ne 0$] is an interval. 
\end{cor}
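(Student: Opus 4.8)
The plan is to verify the defining implication of an interval-type set directly from Theorem~\ref{thm:onesided_nonvanishing}, which already carries all the analytic content. Write $\calI$ for the family of quasi-arithmetic means whose generator admits a right-sided derivative at $x_0$ that is different from zero (the left-sided version is proved by the same argument, so I would treat only this case). By \eqref{E:Eqcond} this property is invariant under replacing a generator $f$ by $\alpha f+\beta$ with $\alpha\ne0$, hence it depends only on the mean $\QA{f}$ and not on the particular generator chosen; thus $\calI$ is well defined as a set of means. This invariance is the one small point that must be checked, and it is immediate since $(\alpha f+\beta)'_+(x_0)=\alpha f'_+(x_0)$.

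To check that $\calI$ is an interval, I would take $\QA{f},\QA{g}\in\calI$ together with an arbitrary $\QA{h}$ satisfying $\QA{f}\le\QA{h}\le\QA{g}$, and aim to show $\QA{h}\in\calI$. The key observation is that the upper bound plays no role here: from $\QA{f}\le\QA{h}$ alone we obtain that $\QA{h}$ is comparable with $\QA{f}$, that is $\QA{h}\in[\QA{f}]^{\ast}$. Since $\QA{f}\in\calI$, its generator $f$ is right-sided differentiable at $x_0$ with $f'_+(x_0)\ne0$, so the hypotheses of Theorem~\ref{thm:onesided_nonvanishing} are met with this $f$ and with the role of $g$ played by $h$.

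Applying the theorem then yields directly that the generator $h$ is right-sided differentiable at $x_0$ and that $h'_+(x_0)\ne0$; by the invariance noted above this is exactly the statement $\QA{h}\in\calI$, which establishes the defining property of an interval-type set. I do not anticipate a genuine obstacle, since the entire difficulty has been absorbed into Theorem~\ref{thm:onesided_nonvanishing}. The only things to be careful about are that one must invoke comparability (membership in $[\QA{f}]^{\ast}$) rather than the two-sided sandwich condition itself, and that the defining property descends from generators to means through \eqref{E:Eqcond}.
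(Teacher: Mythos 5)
Your proof is correct and matches the paper's intent exactly: the paper states this corollary as an immediate consequence of Theorem~\ref{thm:onesided_nonvanishing} with no written proof, and your argument --- observing that $\QA{f}\le\QA{h}$ alone places $\QA{h}$ in $[\QA{f}]^{\ast}$ and then invoking the theorem --- is precisely that deduction. Your extra check that the one-sided derivative condition passes between generators of the same mean via \eqref{E:Eqcond} is a sensible (and correct) piece of added care.
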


This result can be somehow improved. Namely if derivative of $f'(x_0)$ and $g'(x_0)$ exists and is nonzero and \eq{E:sandwich} holds, then it is also the case in $h$. In can be formally expressed in term of the following 
\begin{thm}
\label{thm:dernon_van}
Quasi-arithmetic means generated by a functions differentiable at certain point $x_0 \in I$ with $f'(x_0)\ne 0$ is an interval.
\end{thm}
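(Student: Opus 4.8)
The plan is to establish the interval property by fixing a sandwich $\QA{f} \le \QA{h} \le \QA{g}$ in which $f$ and $g$ are differentiable at $x_0$ with $f'(x_0),g'(x_0)\ne 0$, and showing that $\QA{h}$ admits a generator differentiable at $x_0$ with nonvanishing derivative. If $x_0$ is an endpoint of $I$, two-sided differentiability collapses to a one-sided notion and the statement is precisely Corollary~\ref{cor:onesided}; so I would assume $x_0\in\interior I$. Exploiting $\QA{f}=\QA{-f}$ together with the affine freedom furnished by \eqref{E:Eqcond}, I would normalize all three generators to be increasing with $f(x_0)=g(x_0)=h(x_0)=0$.

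Since $f$ and $g$ are in particular one-sidedly differentiable at $x_0$ with nonvanishing one-sided derivatives, Corollary~\ref{cor:onesided} (applied once on the right and once on the left) already gives that $h$ is both right- and left-differentiable at $x_0$ with $h'_+(x_0),h'_-(x_0)\ne 0$. Hence the whole theorem reduces to the single equality $h'_+(x_0)=h'_-(x_0)$. This is the step I expect to be the main obstacle: one-sided bounds coming from convexity alone would in principle permit $h$ to have a genuine corner at $x_0$, and the differentiability of $f$ and $g$ must be leveraged to exclude this.

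To pin the two one-sided derivatives together I would pass to $\Phi:=h\circ f^{-1}$ and $\Psi:=g\circ f^{-1}$ near $0$, so $\Phi(0)=\Psi(0)=0$. From $\QA{f}\le\QA{h}$ and \ref{CC:gf^-1} the function $\Phi$ is convex, hence possesses one-sided derivatives at the interior point $0$ with $\Phi'_-(0)\le\Phi'_+(0)$. Because $f$ and $g$ are differentiable at $x_0$ with nonzero derivatives, $\Psi$ is differentiable at $0$ with $\Psi'(0)=g'(x_0)/f'(x_0)>0$, so $\Psi^{-1}$ is differentiable at $0$ with positive derivative $1/\Psi'(0)$. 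From $\QA{h}\le\QA{g}$ and \ref{CC:fg^-1} the composite $\Theta:=h\circ g^{-1}=\Phi\circ\Psi^{-1}$ is concave, whence $\Theta'_-(0)\ge\Theta'_+(0)$. The one-sided chain rule through the increasing differentiable map $\Psi^{-1}$ yields $\Theta'_\pm(0)=\Phi'_\pm(0)/\Psi'(0)$, and dividing out the positive factor converts the concavity inequality into $\Phi'_-(0)\ge\Phi'_+(0)$. Combined with the convexity inequality this forces $\Phi'_-(0)=\Phi'_+(0)$, i.e. $\Phi$ is differentiable at $0$.

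It then follows that $h=\Phi\circ f$ is differentiable at $x_0$ with $h'(x_0)=\Phi'(0)\,f'(x_0)$, and this derivative is nonzero since $h'(x_0)=h'_+(x_0)\ne 0$ by the nonvanishing already secured above. Thus $\QA{h}$ belongs to the family, which is therefore an interval. The only points requiring careful verification are the legitimacy of the one-sided chain rule across $\Psi^{-1}$ and the monotonicity bookkeeping in the normalization, both of which are routine once the reduction to increasing generators with value $0$ at $x_0$ is in place.
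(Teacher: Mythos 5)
Your proposal is correct and follows essentially the same route as the paper: invoke Corollary~\ref{cor:onesided} to get existence and nonvanishing of both one-sided derivatives of $h$ at $x_0$, then derive $h'_-(x_0)\le h'_+(x_0)$ from convexity of $h$ with respect to $f$ and the reverse inequality from concavity of $h$ with respect to $g$. Your detour through $\Theta=\Phi\circ\Psi^{-1}$ and the one-sided chain rule merely makes explicit the bookkeeping the paper leaves implicit.
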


\begin{proof}
Let $f$, $g$, and $h$ be strictly increasing, and \eq{E:sandwich} holds. If $f,\,g$ are differentiable at $x_0$ and $f'(x_0)g'(x_0)\ne 0$ then, by Corollary~\ref{cor:onesided}, we know that both $h'_+(x_0)$ and $h'_-(x_0)$ exists and are nonzero.
Now, by \ref{CC:fg^-1}, $h$ is convex with respect to $f$. Thus $h'_-(x_0) \le h'_+(x_0)$. 

Similarly, by \ref{CC:gf^-1}, $h$ is concave with respect to $g$ and, consequently, $h'_-(x_0) \ge h'_+(x_0)$.
\end{proof}

Furthermore, this result could be rearrange in the case of continuous derivative

\begin{prop}
 \label{prop:C1ne0}
Let $I$ be an open interval. Quasi-arithmetic means generated by a functions belonging to $\mathcal{C}^1(I)$ with nowhere vanishing derivative is an interval.
\end{prop}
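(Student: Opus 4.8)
The plan is to reduce Proposition~\ref{prop:C1ne0} to the already-established Theorem~\ref{thm:dernon_van} by upgrading pointwise differentiability to continuous differentiability. Suppose $f,g\in\mathcal{C}^1(I)$ have nowhere vanishing derivatives and that $\QA{f}\le\QA{h}\le\QA{g}$ for some continuous monotone $h$. The goal is to show $h\in\mathcal{C}^1(I)$ with $h'$ nowhere zero. First I would observe that at every single point $x_0\in I$, both $f$ and $g$ are differentiable with $f'(x_0)g'(x_0)\ne0$, so Theorem~\ref{thm:dernon_van} applies \emph{pointwise}: for each $x_0$ the sandwiched generator $h$ is differentiable at $x_0$ and $h'(x_0)\ne0$. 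Hence $h$ is differentiable on all of $I$ and $h'$ never vanishes; the only thing left to prove is that the function $x\mapsto h'(x)$ is \emph{continuous}.

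To get continuity of $h'$, I would work with the comparability conditions in differentiable form. Assume all three functions are increasing (the other cases are analogous by \eqref{E:Eqcond} and sign considerations). By condition \ref{CC:f'/g'-samemonotonicity} applied to the pair $(f,h)$, the ratio $f'(x)/h'(x)$ is non-increasing on $I$; applied to the pair $(h,g)$, the ratio $h'(x)/g'(x)$ is non-increasing as well. Equivalently, $h'(x)/f'(x)$ is non-decreasing and $g'(x)/h'(x)$ is non-decreasing. These monotonicity statements pin $h'$ between two controlled monotone envelopes. The key point is that a monotone function can only have jump discontinuities, and I would exploit this to rule out any jump in $h'$.

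The main obstacle — and the heart of the argument — will be converting the two monotonicity relations into genuine continuity of $h'$. Concretely, fix $x_0$ and consider one-sided limits: since $h'/f'$ is non-decreasing and $f'$ is continuous and nonzero, the left and right limits of $h'$ at $x_0$ exist, call them $h'(x_0^-)$ and $h'(x_0^+)$, with $h'(x_0^-)\le h'(x_0^+)$. But the companion relation that $g'/h'$ is non-decreasing, together with continuity of $g'$, forces $g'(x_0)/h'(x_0^-)\le g'(x_0)/h'(x_0^+)$, i.e.\ $h'(x_0^+)\le h'(x_0^-)$. Combining the two inequalities gives $h'(x_0^-)=h'(x_0^+)$, and since $h$ is differentiable at $x_0$ this common value equals $h'(x_0)$. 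Thus $h'$ is continuous at every $x_0\in I$, completing the proof. I would take care that $I$ being open guarantees two-sided neighborhoods at each point so that both one-sided limits are available, and I would note explicitly that the nonvanishing of $f'$, $g'$, and $h'$ is what lets me divide freely and transfer the monotonicity of the ratios into control of $h'$ itself.
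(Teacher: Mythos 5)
Your proposal is correct and takes essentially the same route as the paper: pointwise differentiability and nonvanishing of $h'$ come from Theorem~\ref{thm:dernon_van}, and continuity of $h'$ is then extracted from the monotonicity of the ratios $h'/f'$ and $h'/g'$ supplied by \ref{CC:f'/g'-samemonotonicity}. The paper normalizes all three derivatives to $1$ at $x_0$ and squeezes $h'$ between the continuous envelopes $\min(f',g')$ and $\max(f',g')$, whereas you argue via one-sided limits of the monotone ratios; the two executions are interchangeable.
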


\begin{proof}
Suppose that $f,\,g,\,h$ are increasing, $f,\,g \in \mathcal{C}^1(I)$, $f' \cdot g' \ne 0$ and \eq{E:sandwich} holds. Then, by Theorem~\ref{thm:dernon_van}, $h$ is differentiable and $h' \ne 0$. Moreover, by \ref{CC:f'/g'-samemonotonicity}, we know that
$h'/f'$ is non-decreasing and $h'/g'$ is non-increasing.

Let $x_0\in I$. We can take affine transformations such that $f(x_0)=g(x_0)=h(x_0)=0$ and $f'(x_0)=g'(x_0)=h'(x_0)=1$.
Thus for all $x>x_0$, $f'(x)\le h'(x) \le g'(x)$. Similarly, for $x<x_0$, $g'(x)\le h'(x) \le f'(x)$.
Therefore
\Eq{*}{
l(x):=\min(f'(x),g'(x)) \le h'(x) \le \max(f'(x),g'(x)):=u(x) \text{ for all }x.
}
But $l(x_0)=u(x_0)=1$ and both $l$ and $u$ are continuous so $h'$ is continuous at the point $x_0$. 
But $x_0$ was arbitrary so $h \in \mathcal{C}^1(I)$.
\end{proof}

We are now heading toward one-sided differentiability in certain point (without vanishing or nonvanishing assumptions). First we will prove a very useful lemma.

\begin{lem}
\label{lem:comp0non0}
If $g,h \colon I \to \R$ are right-(left-)sided differentiable at $x_0 \in \interior I$ with $g_+'(x_0)=0$ and $h_+'(x_0) \ne 0$ ($g_-'(x_0)=0$ and $h_-'(x_0) \ne 0$) then $\QA{g}$ is not comparable with $\QA{h}$.
\end{lem}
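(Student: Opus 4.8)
The plan is to obtain the lemma as the contrapositive of Theorem~\ref{thm:onesided_nonvanishing}. I would argue by contradiction: assume $\QA{g}$ and $\QA{h}$ are comparable, that is $\QA{g}\in[\QA{h}]^{\ast}$. Since $x_0\in\interior I$ and $h$ is right-differentiable at $x_0$ with $h'_+(x_0)\ne0$, I apply Theorem~\ref{thm:onesided_nonvanishing} with $f:=h$; it yields that $g$ is right-differentiable at $x_0$ with $g'_+(x_0)\ne0$. This contradicts the hypothesis $g'_+(x_0)=0$, so $\QA{g}$ and $\QA{h}$ cannot be comparable. The left-sided statement follows verbatim after exchanging every subscript $+$ with $-$.

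Since the whole argument is absorbed into an already proved theorem, there is formally nothing left to do; the genuine content lives inside Theorem~\ref{thm:onesided_nonvanishing}, and it is worth recalling which mechanism does the work in case one prefers a self-contained proof. Normalising $g(x_0)=h(x_0)=0$ by \eqref{E:Eqcond} and, say, taking $g,h$ increasing (the remaining monotonicity cases reduce to this one by replacing a generator with its negative), I would set $\phi:=g\circ h^{-1}$. A single difference-quotient computation, passing to the variable $x=h^{-1}(y)\to x_0^+$, gives
\[
\phi'_+(0)=\lim_{x\to x_0^+}\frac{g(x)-g(x_0)}{h(x)-h(x_0)}=\frac{g'_+(x_0)}{h'_+(x_0)}=0 .
\]
On the other hand, comparability of $\QA{g}$ and $\QA{h}$ forces $\phi$ to be convex or concave by \ref{CC:gf^-1} and \ref{CC:fg^-1}, and strict monotonicity of $g,h$ makes $\phi$ strictly monotone on a two-sided neighbourhood of $0$.

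The hard part is exactly this last tension. A strictly monotone convex (or concave) function on an open interval cannot have a vanishing one-sided derivative at an interior point: convexity makes $\phi'$ non-decreasing, so $\phi'_+(0)=0$ would force $\phi'\le0$ to the left of $0$, contradicting strict monotonicity, and the concave and decreasing variants are symmetric. This is precisely where $x_0\in\interior I$ is indispensable, since at an endpoint a function such as $y\mapsto y^2$ is strictly increasing, convex, and yet has vanishing one-sided derivative. Once this sign argument is granted, the equality $\phi'_+(0)=0$ is impossible, and the contradiction, hence the lemma, follows at once.
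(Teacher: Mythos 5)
Your proof is correct, but it takes a genuinely different route from the paper's. The paper proves the lemma directly and constructively: after an affine normalisation ($\hat g(x_0)=\hat h(x_0)=0$, $\hat h(x_0+\varepsilon)=1$, $\hat g(x_0+\varepsilon)=2$) it locates the first point $\xi>x_0$ where $\hat g$ and $\hat h$ cross, and exhibits the explicit two-point vector $(x_0,\xi)$ on which $\QA{g}_{1/2}\ge \QA{h}_{1/2}$; repeating the construction on the other side of $x_0$ yields the reverse inequality, and non-comparability follows via condition \ref{CC:I^2weighted-givenxi}. You instead obtain the lemma as the contrapositive of Theorem~\ref{thm:onesided_nonvanishing}: if the means were comparable, the nonvanishing one-sided derivative of $h$ would transfer to $g$, contradicting $g'_+(x_0)=0$. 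This is logically sound --- Theorem~\ref{thm:onesided_nonvanishing} precedes the lemma and its proof does not rely on it, so there is no circularity --- and your supplementary self-contained argument (a strictly monotone convex or concave $g\circ h^{-1}$ cannot have a vanishing one-sided derivative at an \emph{interior} point of its domain, which is exactly where the hypothesis $x_0\in\interior I$ enters) is precisely the mechanism already at work inside the proof of Theorem~\ref{thm:onesided_nonvanishing}, so you have correctly isolated where the real content lives. What the paper's approach buys is independence from the earlier theorem and explicit witnessing vectors on which each of the two possible inequalities between the means fails; what yours buys is brevity and a transparent logical link between the lemma and Theorem~\ref{thm:onesided_nonvanishing}.
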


\begin{proof}
Take $\varepsilon >0$ such that $x_0+\varepsilon \in I$. There exist affine transformations $\hat{g}$ and $\hat{h}$ of $g$ and $h$, respectively, such that
\begin{itemize}
 \item $\hat{g}(x_0)=\hat{h}(x_0)=0$,
 \item $\hat{h}(x_0+\varepsilon)=1$,
 \item $\hat{g}(x_0+\varepsilon)=2$.
 \end{itemize}
We obviously have $\hat{h}'(x_0)>0$ and $\hat{g}'(x_0)=0$.
It implies that $\hat g(x)<h(x)$ is some right neighborhood of $x_0$.
Let $\xi>x_0$ be the smallest number such that $\hat g(\xi)=h(\xi)$.
Now, as $\hat g(x)\le h(x)$ for all $x \in [x_0,\xi]$ we obtain
\Eq{*}{
\hat g^{-1}(y)\ge h^{-1}(y)\text{ for all } y \in [0,\hat g(\xi)].
}
in particular, for $y:=\hat g(\xi)/2$ we get
\Eq{*}{
\QA{g}_{1/2}(x_0,\xi)&=\QA{\hat g}_{1/2}(x_0,\xi)=\hat g^{-1}(\hat g(\xi)/2)=\hat g^{-1}(y) \\
&\ge  h^{-1}(y)=h^{-1}(h(\xi)/2)=\QA{h}_{1/2}(x_0,\xi)=\QA{h}_{1/2}(x_0,\xi)
}
To obtain the converse we can adopt this proof assuming $\varepsilon<0$ (regarding we will consider maximal $\xi$ and some sings will be changed).
\end{proof}

Having this already proved we can skip the assumption about nonvanishing one-sided derivative and obtain

\begin{thm}
The family of quasi-arithmetic means generated by functions differentiable at some point $x_0 \in \interior I$ is an interval-type set.
 \end{thm}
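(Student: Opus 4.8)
The plan is to reduce the statement to the three results already established—Theorem~\ref{thm:dernon_van}, Corollary~\ref{cor:onesided_vanishing}, and Lemma~\ref{lem:comp0non0}—by splitting into cases according to whether the derivatives of the outer generators vanish at $x_0$. Suppose $\QA{f} \le \QA{h} \le \QA{g}$ with $f,g$ differentiable at $x_0$; using \eqref{E:Eqcond} we may normalize so that $f$, $g$, $h$ are all increasing. Since $f$ is differentiable at $x_0$ its one-sided derivatives coincide, so $f'_+(x_0) = f'_-(x_0) = f'(x_0)$, and likewise for $g$; in particular $f'(x_0) = 0$ is equivalent to $f'_+(x_0) = 0$ (and to $f'_-(x_0)=0$), a remark I use below.

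First I would dispose of the mixed case. Suppose, say, $f'(x_0) = 0$ while $g'(x_0) \ne 0$. Then $f'_+(x_0) = 0$ and $g'_+(x_0) \ne 0$, so Lemma~\ref{lem:comp0non0} asserts that $\QA{f}$ and $\QA{g}$ are not comparable. This contradicts $\QA{f} \le \QA{g}$, which follows from the sandwich hypothesis by transitivity of the order. The symmetric assumption $g'(x_0)=0$, $f'(x_0)\ne 0$ is excluded in the same way. Hence either both derivatives vanish at $x_0$ or neither does.

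If $f'(x_0)\ne0$ and $g'(x_0)\ne0$, then $\QA{f}$ and $\QA{g}$ both lie in the family of means generated by functions differentiable at $x_0$ with nonvanishing derivative, which is an interval by Theorem~\ref{thm:dernon_van}; consequently $\QA{h}$ lies in that family, so $h$ is differentiable at $x_0$ (indeed with $h'(x_0)\ne0$). If instead $f'(x_0)=g'(x_0)=0$, I would apply Corollary~\ref{cor:onesided_vanishing} twice: on a right neighborhood $f'_+(x_0)=g'_+(x_0)=0$ forces $h'_+(x_0)=0$, and on a left neighborhood $f'_-(x_0)=g'_-(x_0)=0$ forces $h'_-(x_0)=0$. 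Since the two one-sided derivatives of $h$ exist and agree, $h$ is differentiable at $x_0$ (with $h'(x_0)=0$). In both admissible cases $\QA{h}$ is generated by a function differentiable at $x_0$, which proves the set is interval-type.

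The bulk of the work has already been absorbed into the cited results, so the only genuinely new step—and the one I expect to be the crux—is recognizing that Lemma~\ref{lem:comp0non0} rules out the mixed case entirely. Without this observation one would be forced to analyze a generator $h$ sandwiched between functions with vanishing and nonvanishing derivatives, a configuration in which there is no evident reason for $h'(x_0)$ to exist; the lemma shows that such a configuration is simply incompatible with comparability, so no separate argument for it is needed.
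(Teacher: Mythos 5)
Your proposal is correct and follows essentially the same route as the paper's own proof: use Lemma~\ref{lem:comp0non0} to exclude the mixed case, then invoke Theorem~\ref{thm:dernon_van} when both derivatives are nonzero and Corollary~\ref{cor:onesided_vanishing} (applied on each side) when both vanish. Your write-up merely makes explicit the two one-sided applications of the corollary that the paper leaves implicit.
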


\begin{proof}
Suppose that $f$ and $g$ are differentiable at $x_0$ and $\QA{f} \le \QA{h} \le \QA{g}$. As $\QA{f}$ and $\QA{g}$ are comparable, by Lemma~\ref{lem:comp0non0} we get that either $f'(x_0)=g'(x_0)=0$ or $f'(x_0) \cdot g'(x_0)\ne 0$. In the first case we can use Corollary~\ref{cor:onesided_vanishing} to obtain differentiability of $h$ at $x_0$, while in the second case we use Theorem~\ref{thm:dernon_van}.
\end{proof}

In the case when $I$ is open we can take an intersection of these interval-type sets over all $x_0 \in I$ to obtain

\begin{cor}
The family of quasi-arithmetic means defined on an open interval $I$ generated by differentiable functions is an interval-type set.
 \end{cor}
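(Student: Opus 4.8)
The plan is to reduce the statement to the theorem just proved together with the elementary fact, recalled in the introduction, that an arbitrary intersection of interval-type sets is again an interval-type set.

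For each point $x_0 \in I$ let $\calI_{x_0}$ denote the family of quasi-arithmetic means generated by functions that are differentiable at $x_0$. Since $I$ is open, every $x_0 \in I$ lies in $\interior I$, so the preceding theorem applies verbatim and guarantees that each $\calI_{x_0}$ is an interval-type set.

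Next I would identify the family in the statement with the intersection $\bigcap_{x_0 \in I} \calI_{x_0}$. The key observation here is that membership in $\calI_{x_0}$ is a well-defined property of the mean, not of a particular generator: by the equality condition \eqref{E:Eqcond}, any two generators of the same quasi-arithmetic mean differ by an affine map $t \mapsto \alpha t + \beta$ with $\alpha \ne 0$, and differentiability at a fixed point is invariant under such maps. Hence $\QA{h} \in \calI_{x_0}$ holds precisely when one (equivalently, every) generator of $\QA{h}$ is differentiable at $x_0$. Taking the intersection over all $x_0 \in I$ then says exactly that some generator is differentiable at every point of $I$, i.e.\ that $\QA{h}$ is generated by a function differentiable on $I$. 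This yields
\[
\{\QA{h} \colon h \text{ is differentiable on } I\} = \bigcap_{x_0 \in I} \calI_{x_0}.
\]

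Finally, since each $\calI_{x_0}$ is an interval-type set and the intersection of any family of interval-type sets is again an interval-type set, the right-hand side is an interval, which is the assertion. I expect no serious obstacle in this argument: the only point requiring care is the affine-invariance bookkeeping of the second step, which ensures that the pointwise families glue into the global differentiability class rather than producing a strictly larger set; everything else is a direct appeal to the previous theorem and to closure under intersection.
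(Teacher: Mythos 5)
Your argument is correct and is precisely the paper's own: the author likewise obtains the corollary by intersecting, over all $x_0\in I$, the interval-type sets of means generated by functions differentiable at $x_0$, and invoking closure of interval-type sets under arbitrary intersections. Your extra remark on affine-invariance (via \eqref{E:Eqcond}) to identify the intersection with the global differentiability class is a welcome bit of care that the paper leaves implicit.
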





\def\cprime{$'$} \def\R{\mathbb R} \def\Z{\mathbb Z} \def\Q{\mathbb Q}
  \def\C{\mathbb C}

\end{document}